\newdimen\hoogte    \hoogte=12pt    % hoogte  van hokje
\newdimen\breedte  \breedte=14pt  % breedte van hokje
\newdimen\dikte    \dikte=0.5pt    % dikte lijn
\def\beginYoung{
        \begingroup
        \def\vr{\vrule height0.8\hoogte width\dikte depth 0.3\hoogte}
        \def\fbox##1{\vbox{\offinterlineskip
                    \hrule height\dikte
                    \hbox to \breedte{\vr\hfill##1\hfill\vr}
                    \hrule height\dikte}}
        \vbox\bgroup \offinterlineskip \tabskip=-\dikte \lineskip=-\dikte
            \halign\bgroup &\fbox{##\unskip}\unskip  \crcr }
\def\End@Young{\egroup\egroup\endgroup}
\newdimen\breedtelong  \breedtelong=28pt  % breedte van hokje
\def\beginYounglong{
        \begingroup
        \def\vr{\vrule height0.8\hoogte width\dikte depth 0.3\hoogte}
        \def\fboxlong##1{\vbox{\offinterlineskip
                    \hrule height\dikte
                    \hbox to \breedtelong{\vr\hfill##1\hfill\vr}
                    \hrule height\dikte}}
        \vbox\bgroup \offinterlineskip \tabskip=-\dikte \lineskip=-\dikte
            \halign\bgroup &\fboxlong{##\unskip}\unskip  \crcr }
\def\Endlong@Younglong{\egroup\egroup\endgroup}
\newcommand{\C}{\ensuremath{\mathbb{C}}}
\newcommand{\R}{\ensuremath{\mathbb{R}}}
\date{\today}
\begin{document}

\title*{Between two moments}

\author{Alain Chenciner\inst{1}\and 
Bernard Leclerc\inst{2}}
\institute{Observatoire de Paris, IMCCE (UMR 8028), ASD,
77, avenue Denfert-Rochereau, F75014 Paris  \& D\'epartement de math\'ematique, Universit\'e Paris 7,
\texttt{chenciner@imcce.fr}
\and Universit\'e de Caen, LMNO (UMR 6139), Campus 2, F14032 Caen Cedex,
\texttt{bernard.leclerc@unicaen.fr}}

\maketitle

\hangindent=6cm\hangafter =-9 \noindent{\it Dedicated with admiration to the\\ memory of Jean-Marie Souriau.}\medskip

\abstract In this short note, we draw
attention to a relation between two Horn polytopes which is proved in \cite{CJ} as the result on the one side of a deep combinatorial result in \cite{FFLP}, on the other side of a simple computation involving complex structures. This suggested an inequality between Littlewood-Richardson coefficients which we prove using the symmetric characterization of these coefficients given in \cite{CL}.
\medskip

\section{Moment maps and Horn's problem}

Recall (see for example \cite{K}) that the mapping $H\mapsto [k\mapsto Trace(iHk)]$ is a natural identification of the space $\mathcal H$ of Hermitian matrices with the dual $u(n)^*$ of the Lie algebra $u(n)$ of the unitary group $U(n)$. With this identification, the coadjoint action of $U(n)$ on $u(n)^*$ becomes the action by conjugation on the space of Hermitian matrices and the orbits ${\mathcal O}_\lambda$ are uniquely characterized by the common spectrum $\lambda$ of their elements. A coadjoint orbit being endowed with the Kostant-Souriau symplectic structure, the moment map becomes the canonical inclusion ${\mathcal O}_\lambda\subset\mathcal H$. 
\smallskip 

\noindent It follows from standard properties of moment maps (see again \cite{K}) that with this identification, the moment map of the diagonal action of $U(n)$ on 
${\mathcal O}_\lambda\times {\mathcal O}_\mu$ is simply the mapping $(A,B)\mapsto A+B$. This gives the relation with the so-called {\it Horn's problem} of describing the inequalities which constrain the spectrum of the sum of two hermitian matrices whose spectra are given (see the beautiful review \cite{F}).

\noindent Kirwan's theorem asserts that, in the case of the diagonal action above, the intersection of the image of the moment map with the principal Weyl chamber $W_n^+=\nu=\{\nu_1\ge\nu_2\ge\cdots\ge\nu_n\}$, that is the set  of ordered spectra of matrices of the form $A+B$ where 
$A\in{\mathcal O}_\lambda$ and $B\in{\mathcal O}_\mu$, is a convex polytope, which we shall call the {\it Horn polytope}. Moreover, Horn's conjecture, whose proof follows from the works of Klyachko on the one hand, Knutson and Tao on the other hand (see \cite{F}), gives inequalities defining the faces of the Horn polytope.
\smallskip

\noindent Finally, replacing
Hermitian matrices by
symmetric matrices and the unitary group by the orthogonal group does not change the Horn polytope, namely: the set of ordered spectra $\nu$ of the sums $A+B$ of matrices with given spectra $\lambda$ and $\mu$ is the same whether $A$ and $B$ are hermitian or real symmetric (see \cite{F}).

\section{Complex structures and a Horn-type problem} Motivated by a study of the angular momenta of rigid body motions in dimensions higher than 3 (see \cite{C, CJ}),
let us consider the following ``Horn-like" problem: characterize the set $\mathcal Q$ (noted $\hbox{Im}{\mathcal F}$ in \cite{CJ}) of ordered spectra of sums 
$$J^{-1}S_0J+S_0,$$ 
where $S_0$ is a given 
$2p\times 2p$ real symmetric matrix and $J\in  O(2p)$ is a complex structure, that is an isometry of $\R^{2p}$ (with its standard euclidean structure) such that $J^2=-\mathrm{Id}$. 
\smallskip

\noindent Replacing the set of complex structures $J$ by the full group of isometries $O(2p)$, one gets exactly the Horn polytope in the case when $n=2p$ and the two spectra $\lambda$ and $\mu$ coincide with the one of $S_0$. 
\smallskip

\noindent The main result of \cite{CJ} is that $\mathcal Q$ is a convex polytope. The proof is not direct: one encloses $\mathcal Q$ between two convex polytopes $\mathcal P_1$ and $\mathcal P_2$ associated with two Horn's problems, one in dimension $p$ and one in dimension $2p$, and one deduces from results due to Fomin, Fulton, Li and Poon \cite{FFLP} and based on deep combinatorial lemmas due to Carr\'e and Leclerc \cite{CL}, that $\mathcal P_1$ and $\mathcal P_2$ coincide, and hence coincide with $\mathcal Q$.

\section{The two polytopes ${\mathcal P}_1$ and ${\mathcal P}_2$}

\noindent Given decreasing sequences 
$$\sigma=(\sigma_1\ge\sigma_2\ge\cdots\ge\sigma_{2p})\quad\hbox{and}\quad \nu=(\nu_1\ge\cdots\ge\nu_p)$$ 
of real numbers, one defines
\begin{equation}
\left\{
\begin{split}
&\sigma_-=(\sigma_1,\sigma_3,\cdots,\sigma_{2p-1}),\quad \sigma_+=(\sigma_2,\sigma_4,\cdots,\sigma_{2p}),\\
&\nu^{(2)}=(\nu_1,\nu_1,\nu_2,\nu_2,\cdots,\nu_p,\nu_p).
\end{split}
\right.
\end{equation}
\smallskip

\noindent 
Now, ${\mathcal P}_1$ is the Horn polytope for $p\times p$ symmetric matrices $a+b$ with $\hbox{spectrum\,}(a)=\sigma_-$ and $\hbox{spectrum\,}(b)=\sigma_+$, while ${\mathcal P}_2$ is the intersection with the set $\Delta$ of ``hermitian spectra" (that is spectra of the form $\nu^{(2)}$) of the Horn polytope for  $2p\times 2p$ symmetric matrices $A+B$ with $\hbox{spectrum\,} (A)=\hbox{spectrum\,} (B)=\sigma$. \smallskip

\section{The inclusion ${\mathcal P}_1\subset{\mathcal P}_2$}

The inclusion ${\mathcal P}_1\subset{\mathcal P}_2$ is elementary: it  follows from the inclusions
\begin{equation}
{\mathcal P}_1\subset {\mathcal Q}\subset{\mathcal P}_2,
\end{equation}
where ${\mathcal Q}$ is the set of ordered spectra of matrices of the form $S+J^{-1}SJ$, where $S=\hbox{diag\,}\sigma$ and $J$ is a complex structure:
$$
J=R^{-1}J_0R,\; R\in SO(2p), J_0=\begin{pmatrix}0&-\mathrm{Id}\\\mathrm{Id}&0\end{pmatrix}.
$$

\noindent The second inclusion is obvious (replace $J$ by any element $R\in SO(2p)$ and look for the spectra of sums $S+R^{-1}SR$ which are $J$-hermitian for some $J$;
\smallskip

\noindent The first one comes from the fact that ${\mathcal P}_1$ is exactly the subset of $\hbox{Im\,}{\mathcal F}$ obtained when one takes into account only the complex structures $J$ which send the subspace of $\R^{2p}$ generated by the basis vectors $\vec e_1,\vec e_3,\cdots,\vec e_{2p-1}$ onto the orthogonal subspace, generated by$\{\vec e_2,\vec e_4,\cdots,\vec e_{2p}\}$.
More precisely, it comes from the following identity where $\rho\in SO(p)$:
\begin{equation}
\begin{cases}
&\begin{pmatrix}
\sigma_-&0\\
0&\sigma_+
\end{pmatrix}+
\begin{pmatrix}
0&-\rho^{-1}\\
\rho&0
\end{pmatrix}^{-1}
\begin{pmatrix}
\sigma_-&0\\
0&\sigma_+
\end{pmatrix}
\begin{pmatrix}
0&-\rho^{-1}\\
\rho&0
\end{pmatrix}\\
=&
\begin{pmatrix}
\sigma_-+\rho^{-1}\sigma_+\rho&0\\
0&\rho\sigma_-\rho^{-1}+\sigma_+
\end{pmatrix}\cdot
\end{cases}
\end{equation}

\section{Littlewood-Richardson coefficients}

{\it From now on, the decreasing sequences that we consider consist in non negative integers, which allows them to take the name of ``partitions".
One calls $|\lambda|=\sum{\lambda_i}$ the ``length" of the partition.}
\smallskip

\noindent The so-called Littlewood-Richardson coefficients have their origin in the decomposition into irreducible holomorphic representations of the tensor product of two irreducible representations of the linear group $GL(n,\C)$. For their definition and  their many avatars, as well as for the computational rule expressing them in terms of the Ferrer-Young diagrams representing partitions, we refer to \cite{F}. 
\smallskip

Let 
\begin{equation*}
\begin{split}
&\alpha=\{\alpha_1\ge\alpha_2\ge\cdots\ge\alpha_n\},\\
&\beta=\{\beta_1\ge\beta_2\ge\cdots\ge\beta_n\},\\
&\gamma=\{\gamma_1\ge\gamma_2\ge\cdots\ge\gamma_n\},
\end{split}
\end{equation*}
be three partitions.
\smallskip

\noindent It results from the proof of Horn's conjecture that $\alpha, \beta,\gamma$ are respectively the spectra of $n\times n$ hermitian (or real symmetric) matrices $A,B,C$ with $C=A+B$ if and only if (\cite{F}, theorem 11)
\smallskip

1)$|\gamma|=|\alpha|+|\beta|$, 
\smallskip

2) the Littlewood-Richardson coefficient $c_{\alpha\beta}^\gamma$ does not vanish.
\smallskip

\noindent Hence, identifiying $\Delta$ with a subset of $\R^p$ by the mapping $\nu^{(2)}\mapsto \nu$:
\begin{equation}
\left\{
\begin{split}
{\mathcal P}_1&=\left\{\nu=(\nu_1\ge\nu_2\ge\cdots\ge\nu_p), c_{\sigma_-\sigma_+}^\nu\not=0\right\},\\
{\mathcal P}_2&=\left\{\nu=(\nu_1\ge\nu_2\ge\cdots\ge\nu_p), c_{\sigma\sigma}^{\nu^{(2)}}\not=0\right\}.
\end{split}
\right.
\end{equation}

\section{Yamanouchi tableaux and the Carr\'e-Leclerc formula}

{\it For the convenience of the reader, we reproduce here definitions given in sections 4.1 and 4.2 of \cite{FFLP}.}
\smallskip

\noindent It is classical to define a bijection $\Lambda$ between strictly increasing sequences $I=(i_1<i_2<\cdots <i_r)$ and partitions $\Lambda(I)=(\lambda_1\ge\cdots\ge\lambda_r)$ by the formul\ae
$$\lambda_1=i_r-r, \cdots,\lambda_r=i_1-1.$$
Given two strictly increasing sequences, one defines
$$\tau(I,J) =(2i_1-1,\cdots,2i_r-1)\cup(2j_1,\cdots, 2j_r)\; \hbox{reordered}.$$
To a partition $\lambda$ is associated a {\it Young diagram} of {\it shape} $\lambda$: this is a collection of boxes arranged in rows whose lengths from top to bottom (anglo-saxon convention) are $\lambda_1,\lambda_2,\cdots$ 
The induced operation on partitions,  $(\lambda,\mu)\mapsto \tau(\lambda,\mu)$  is described in \cite{FFLP} in terms of Young diagrams in the following way: ``If one traces the Young diagram of a partition by a sequence of horizontal and vertical steps moving from Southwest to Northeast in a rectangle containing the diagrams of $\lambda$ and $\mu$, the diagram of $\tau(\lambda,\mu)$ is traced in a rectangle twice as wide in both directions, by alternating steps from $\lambda$ and $\mu$, starting with the first step of $\lambda$". 
 It is also recalled in this paper that partitions $\tau(\lambda,\mu)$ are exactly the ones which correspond to {\it domino-decomposable} Young tableaux, that is tableaux which can be partitioned into disjoint $1\times 2$ or $2\times 1$ rectangles (the so-called {\it dominoes}).
A (semi-standard) {\it domino tableau} is such a decomposition of a tableau into dominos with a labelling rule of the dominos similar to the one of ordinary semi-standard tableaux: labels weakly increase from left to right along lines and strictly increase along columns from top to bottom.  
Special domino tableaux, called {\it Yamanouchi}, play the leading part: let the {\it reading word} of a tableau be obtained by listing the labels column by column from right to left and from top to bottom (a horizontal domino is skipped the first time it is encountered); a tableau is called {\it Yamanouchi} if every entry $i$ appears in any initial segment of its reading word at least as many times as any entry $j>i$.
 In \cite{CL}, the following rule for computing the Littlewood-Richardson coefficients is given in which, in contrast with the original Littlewood-Richardson rule, the lower indices play symmetric roles: let the {\it weight} $\nu$  of a Yamanouchi tableau be the sequence $(\nu_1,\cdots,\nu_n)$ where $\nu_i$ is the number of labels equal to $i$.  Note that $\nu_1\ge\cdots\ge\nu_n$.
  
\begin{proposition}[\cite{CL} Corollary 4.4]\label{LR=YDT} The coefficient $c^\nu_{\lambda\mu}$ is equal to the number of Yamanouchi domino tableaux of shape $\tau(\lambda,\mu)$ and weight $\nu$.
\end{proposition}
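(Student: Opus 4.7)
My plan is to prove Proposition~\ref{LR=YDT} by realizing both sides of the equality in terms of the same enumeration via an explicit bijection. On one side, the classical Littlewood-Richardson rule counts $c^\nu_{\lambda\mu}$ as the number of semistandard skew tableaux of shape $\nu/\lambda$ and content $\mu$ whose reverse reading word is a lattice (Yamanouchi) word. On the other side, the statement to prove involves Yamanouchi domino tableaux of shape $\tau(\lambda,\mu)$ and weight $\nu$. The link between these two models should come from the Stanton--White 2-quotient correspondence.

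First, I would recall that the 2-core of a partition is the residue after iterative removal of dominos, and that the 2-quotient records the removal data as an ordered pair of partitions. The interlacing construction of $\tau(\lambda,\mu)$ described in the text is designed precisely so that $\tau(\lambda,\mu)$ has empty 2-core and 2-quotient equal to the ordered pair $(\lambda,\mu)$. The Stanton--White bijection then matches semistandard domino tableaux of shape $\tau(\lambda,\mu)$ and total weight $\nu$ with pairs $(S,T)$ of semistandard Young tableaux of shapes $\lambda$ and $\mu$ whose content sum equals $\nu$.

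Next, I would track how the Yamanouchi condition on the domino reading word (with its ``skip horizontal dominos on first encounter'' rule) transfers through this bijection. The expected outcome is that this condition forces $S$ to be the superstandard tableau of shape $\lambda$ (row $i$ filled with $i$'s), while $T$ ranges over the fillings of $\mu$ satisfying a lattice word property equivalent to the one demanded by the classical rule. Matched against the Littlewood-Richardson rule, this would identify the two counts and, as a byproduct, display the symmetric roles of $\lambda$ and $\mu$ mirroring the symmetry $\tau(\lambda,\mu)=\tau(\mu,\lambda)$ (up to reordering of dominos), which is precisely the point emphasized in \cite{CL}.

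The principal obstacle will be the combinatorial verification of the correspondence between the two Yamanouchi conditions: the domino reading convention is intricate enough that a direct comparison is unwieldy. I would instead proceed by induction on $|\nu|$, arguing that one can delete the domino carrying the largest label of a Yamanouchi domino tableau (which must be a removable domino occurring at the start of the reading), reduce $\tau(\lambda,\mu)$ to a shape $\tau(\lambda',\mu')$ for a suitable pair of smaller partitions, and invoke the induction hypothesis. Proving that the removal preserves both the Yamanouchi property and the interlaced shape, and conversely that any admissible addition can be undone, is the delicate core of the argument, and would rely on the domino jeu de taquin and the dual equivalence machinery developed in \cite{CL}.
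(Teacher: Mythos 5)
This proposition is not proved in the paper at all: it is quoted verbatim as Corollary~4.4 of \cite{CL}, so there is no internal argument to compare yours against. Judged on its own terms, your proposal is a reasonable roadmap but it contains a genuine gap exactly where the theorem lives. The two structural facts you invoke are correct and standard: $\tau(\lambda,\mu)$ has empty $2$-core and $2$-quotient $(\lambda,\mu)$, and the Stanton--White correspondence gives a weight-preserving bijection between semistandard domino tableaux of shape $\tau(\lambda,\mu)$ and pairs of semistandard tableaux of shapes $\lambda$ and $\mu$ (whence the generating function $s_\lambda s_\mu$). But the entire content of the result is the compatibility of the \emph{Yamanouchi} condition --- defined via the column reading word with the ``skip horizontal dominoes on first encounter'' convention --- with this bijection, and at that point you write only that ``the expected outcome is'' that the condition forces the $\lambda$-component to be superstandard and the $\mu$-component to be lattice. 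That is an unproved conjecture, not a step; and it is not in fact how the argument of \cite{CL} goes. There, one does not try to push the domino Yamanouchi condition directly through the $2$-quotient map (the column reading word interleaves the two quotient components in a way that does not localize the condition on one factor); instead one equips the set of domino tableaux of fixed shape with coplactic (crystal-type) operators, shows each connected component contains exactly one Yamanouchi tableau and has character $s_\nu$ where $\nu$ is its weight, and concludes that the number of Yamanouchi domino tableaux of weight $\nu$ is the multiplicity of $s_\nu$ in $s_\lambda s_\mu$.

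Your fallback --- induction on $|\nu|$ by removing the domino carrying the largest label --- does not repair this: you yourself flag that proving removal preserves the Yamanouchi property and lands on a shape $\tau(\lambda',\mu')$ is ``the delicate core,'' and you propose to rely on the domino jeu de taquin and dual equivalence machinery of \cite{CL} without developing it. Since that machinery is precisely what is built in \cite{CL} in order to prove this corollary, the proposal as written relocates the difficulty rather than resolving it. To turn this into a proof you would need either (i) to actually establish the claimed characterization of Yamanouchi domino tableaux under Stanton--White (and first check it on small cases such as $\lambda=\mu=(1)$, where the shape is $(2,2)$), or (ii) to construct the coplactic operators on domino tableaux and verify their defining properties, which is the substantial work of \cite{CL}.
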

Pay attention that, while $c^\nu_{\lambda\mu}=c^\nu_{\mu\lambda}$,  the diagrams  $\tau(\lambda,\mu)$ and $\tau(\mu,\lambda)$ have in general different shapes. This will be important in the sequel.

\section{Strengthening the inclusion ${\mathcal P}_1\subset{\mathcal P}_2$}

The following proposition strengthens the inclusion ${\mathcal P}_1\subset{\mathcal P}_2$ while it was suggested by it:
\begin{proposition}\label{inequality} Given partitions $\sigma=\{\sigma_1,\sigma_2,\cdots,\sigma_{2p}\}$ and  
$\nu=\{\nu_1,\nu_2,\cdots,\nu_p\}$, we have
$$c_{\sigma_-\sigma_+}^\nu\le c_{\sigma\sigma}^{\nu^{(2)}}.$$
\end{proposition}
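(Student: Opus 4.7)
By Proposition~\ref{LR=YDT}, the inequality is equivalent to the combinatorial statement that the set $A$ of Yamanouchi domino tableaux of shape $\tau(\sigma_-,\sigma_+)$ and weight $\nu$ injects into the set $B$ of Yamanouchi domino tableaux of shape $\tau(\sigma,\sigma)$ and weight $\nu^{(2)}$. My plan is to construct an explicit injection $\Phi\colon A\hookrightarrow B$.

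The first step is to establish the Young-diagram containment $\tau(\sigma_-,\sigma_+)\subset\tau(\sigma,\sigma)$. Writing $I(\sigma)=(k_1<\cdots<k_{2p})$ for the sequence associated with $\sigma$ under the bijection of Section~6, a short computation yields $I(\sigma_-)_m=k_{2m}-m$ and $I(\sigma_+)_m=k_{2m-1}-(m-1)$, and comparing the sorted sequences defining the two partitions part by part gives the containment. Granted this, I would define $\Phi$ on $D\in A$ by placing $D$ inside $\tau(\sigma,\sigma)$ via the containment, relabeling each label $i$ of $D$ by $2i-1$, and filling the complementary skew shape $\tau(\sigma,\sigma)\setminus\tau(\sigma_-,\sigma_+)$ by a canonical domino tiling whose labels are the even numbers $2,4,\dots,2p$ with multiplicities $\nu_1,\dots,\nu_p$, respectively. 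The resulting tableau $\Phi(D)$ then has weight $\nu^{(2)}$, and $D$ is recovered from $\Phi(D)$ by restriction to the sub-diagram and by inverting the relabeling $i\mapsto 2i-1$, so $\Phi$ is automatically injective.

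The main obstacle is to show that $\Phi(D)$ is itself Yamanouchi, i.e., that its reading word is a lattice word. The lattice property of $D$ controls the occurrences of the odd labels $1,3,\dots,2p-1$, and the canonical filling of the complement is designed to control the even labels; but one must verify that when the two readings are interleaved, the inequalities $\#(2i-1)\ge\#(2i)\ge\#(2i+1)$ hold in every initial segment of the reading word. I expect this step to rely on the symmetric character of the Carr\'e--Leclerc formula (reflected in the equality $c^\nu_{\sigma_-\sigma_+}=c^\nu_{\sigma_+\sigma_-}$, even though the corresponding domino shapes $\tau(\sigma_-,\sigma_+)$ and $\tau(\sigma_+,\sigma_-)$ typically differ), together with the geometric identity~(3) of Section~4, which realizes $\nu^{(2)}$ as the spectrum of a block-diagonal matrix built from two mirror copies of $\nu$. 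If the naive canonical filling fails in some specific configuration, the natural remedy is to adjust $\Phi(D)$ by a jeu-de-taquin slide on the even-labeled portion, which preserves both the shape and the Yamanouchi property, until the lattice condition is achieved.
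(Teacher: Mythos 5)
Your construction has a genuine gap at exactly the step you flag as the ``main obstacle,'' and in fact the design cannot be repaired as stated. The requirement that all odd labels sit inside the sub-diagram $\tau(\sigma_-,\sigma_+)$ and all even labels in the skew complement is already impossible in the smallest case: take $p=1$, $\sigma=(2,1)$, $\nu=(3)$. Then $\tau(\sigma_-,\sigma_+)=(3,3)$, $\tau(\sigma,\sigma)=(4,4,2,2)$, and the unique Yamanouchi domino tableau $D$ of shape $(3,3)$ and weight $(3)$ (three vertical dominoes labelled $1$) admits no extension of your form: the corner cell in row $1$, column $4$ lies in the complement, is necessarily covered by a vertical domino in column $4$ carrying the even label $2$, and since column $4$ is read first, the reading word of $\Phi(D)$ begins with $2$ and the lattice condition fails for every choice of ``canonical'' tiling. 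Indeed the unique Yamanouchi tableau of shape $(4,4,2,2)$ and weight $(3,3)$ has an odd label outside $(3,3)$, so no map segregating parities by region can hit it. The proposed fallback does not rescue this: jeu-de-taquin slides on a sub-alphabet do not preserve the shape, the phrase ``preserves the Yamanouchi property until the lattice condition is achieved'' is circular (that condition is the Yamanouchi property), and once you modify $\Phi(D)$ the ``automatic'' injectivity-by-restriction argument is lost. In short, well-definedness of $\Phi$ — which is the entire content of the proposition — is not established, and the specific recipe is false.

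You noticed the key fact but did not use it: since $c^\nu_{\sigma_-\sigma_+}=c^\nu_{\sigma_+\sigma_-}$, you are free to count tableaux of shape $\tau(\sigma_+,\sigma_-)$ instead, and this is the shape that is simply related to $\tau(\sigma,\sigma)$. One computes $\tau(\sigma_+,\sigma_-)=(2\sigma_1,2\sigma_2,\dots,2\sigma_{2p})$ while $\tau(\sigma,\sigma)=(2\sigma_1,2\sigma_1,\dots,2\sigma_{2p},2\sigma_{2p})$, i.e.\ the latter is obtained from the former by duplicating each row. The paper's injection is then immediate: duplicate the rows and split each domino labelled $k$ into two parallel dominoes labelled $2k-1$ and $2k$; the semistandard and lattice conditions are checked at a glance, the weight becomes $\nu^{(2)}$, and injectivity is clear. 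Your embedding of the inconvenient shape $\tau(\sigma_-,\sigma_+)$ (whose containment in $\tau(\sigma,\sigma)$ is, in any case, the easy part) forces the complicated skew-filling problem that has no uniform solution; switching the order of the lower indices dissolves it.
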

\begin{proof}
As in \cite{FFLP}, we use Proposition \ref{LR=YDT}.
We must define an injection of the set of Yamanouchi domino tableaux 
%(YDT)  
of shape $\tau(\sigma_-,\sigma_+)$ (or of shape $\tau(\sigma_+,\sigma_-)$) 
and weight $\nu$ into the set of 
Yamanouchi domino tableaux of shape $\tau(\sigma,\sigma)$ and weight $\nu^{(2)}$. 
In order to prove the above lemma,  the choice of $\tau(\sigma_+,\sigma_-)$ is much more convenient 
because  it is simply related to 
$\tau(\sigma,\sigma)$ while this is not the case of $\tau(\sigma_-,\sigma_+)$, more precisely, if
$\sigma=(\sigma_1,\sigma_2,\cdots,\sigma_{2p})$, 
\begin{equation*}
\begin{split}
\tau(\sigma_+,\sigma_-)&=(2\sigma_1,2\sigma_2,\cdots,2\sigma_{2p}),\\
\tau(\sigma,\sigma)&=(2\sigma_1,2\sigma_1,2\sigma_2,2\sigma_2,\cdots,2\sigma_{2p},2\sigma_{2p}).
\end{split}
\end{equation*}
\end{proof}
\noindent In words, the tableau $\tau(\sigma,\sigma)$ is obtained from the tableau $\tau(\sigma_+,\sigma_-)$ by duplicating each line. The injection we are looking for is then simply obtained by dividing vertically each domino of $\tau(\sigma_+,\sigma_-)$ and numbering the resulting two pieces respectively $2k-1$ and $2k$ if the original domino was numbered $k$. One checks immediately that the Yamanouchi property is verified. 
An example is given in \S\ref{sect-examples} below, see Figure~\ref{figure1} and Figure~\ref{figure2}.

\section{Concluding remarks}

1) As was said at the beginning, the inclusion ${\mathcal P}_1\supset{\mathcal P}_2$, and hence the  equality ${\mathcal P}_1={\mathcal P}_2$,  is proved in \cite{CJ}. It results from  the stronger inclusion  in ${\mathcal P}_1$ of the orthogonal projection on $\Delta$ of the Horn polytope
$$
{\mathcal P}=\left\{\gamma=(\gamma_1\ge\cdots\ge\gamma_{2p}),\; c_{\sigma\sigma}^{\gamma}\not=0\right\}.
$$ 
This last inclusion follows from the inequalities
$c^\nu_{\lambda\mu}\le c^{\tau(\nu,\nu)}_{\tau(\lambda,\mu)\tau(\lambda,\mu)}$, 
valid as soon as the lengths of the partitions at stake satisfy $|\nu|=|\lambda|+|\mu|$,
(\cite{FFLP} Proposition 4.5). The main ingredient of the proof is again the symmetric characterization of  the Littlewood-Richardson coefficients in terms of Yamanouchi tableaux given in \cite{CL}.
\smallskip

\noindent 
The equality ${\mathcal P}_1={\mathcal P}_2$ of the two polytopes has the consequence, not implied by the inequality of lemma \ref{inequality}, 
that  
$$c_{\sigma\sigma}^{\nu^{(2)}}\not=0\quad\hbox{ implies}\quad c_{\sigma_-\sigma_+}^\nu\not=0.$$ 
Nevertheless, the inequality given by Proposition~\ref{inequality} may be strict. An example is shown in \S\ref{sect-examples}, Figure~\ref{figure3}.
\bigskip

\noindent 
2) Dividing $\sigma$ into two partitions $\lambda$ and $\mu$ of length $p$ in an arbitrary way leads to polytopes strictly smaller than ${\mathcal P}_1={\mathcal P}_2$. Correspondingly, we have the following inequalities strengthening these inclusions:
$$c^\nu_{\lambda\mu}\le c^\nu_{\sigma_-\sigma_+}.$$
These inequalities are a special case of Corollary 14 of \cite{LPP}.

\section{Questions}

1) Find a more conceptual relation between the complex structures and  the doubling of Young tableaux. 

\noindent 
2) Find a direct proof of the fact that $\mathcal Q$ is convex, resp. a convex polytope.

\noindent 
3) Find a relation between the inequality which is the object of Proposition~\ref{inequality} and the one in \cite{FFLP} (Proposition 4.5). In the first case, each domino is divided into 2 pieces while in the second one it is divided into 4 pieces.

\section{Examples} \label{sect-examples} 
We take $\sigma=(5,3,2,0)$, so that $\sigma_+ = (3,0)$ and $\sigma_- = (5,2)$.
We then have 
\[
\tau(\sigma_+,\sigma_-)=(10,6,4,0), \qquad \tau(\sigma,\sigma)=(10,10,6,6,4,4,0,0).
\]
In Figure~\ref{figure1} below, we display the Yamanouchi domino tableaux $T_1$, $T_2$, $T_3$, $T_4$ of shape 
$\tau(\sigma_+,\sigma_-)$, and of respective weights
\[
\nu_{1} = (5,5),\quad 
\nu_{2} = (6,4),\quad 
\nu_{3} = (7,3), \quad
\nu_{4} = (8,2).
\]
Their respective reading words are
\[
w_1 = 1112212212,\quad
w_2 = 1112212112,\quad
w_3 = 1112112112,\quad
w_4 = 1111112112.
\]
In Figure~\ref{figure2} we display the corresponding Yamanouchi domino tableaux $U_1$, $U_2$, $U_3$, $U_4$ 
of shape $\tau(\sigma,\sigma)$,
obtained from $T_1$, $T_2$, $T_3$, $T_4$ via the duplication procedure of Proposition~\ref{inequality}. 

Finally, Figure~\ref{figure3} gives an example which shows that the inequality of Proposition~\ref{inequality} may be strict.
Here we take $\sigma = (7,6,4,3)$ and $\nu = (10,8,2)$, and we exhibit a Yamanouchi domino tableau $T$ of
shape $\tau(\sigma,\sigma)$ and weight $\nu^{(2)}$ which cannot be obtained from a Yamanouchi domino tableau
of shape $\tau(\sigma_+,\sigma_-)$ and weight $\nu$ by means of the duplication procedure described in the proof
of Proposition~\ref{inequality}.

\pagebreak

\begin{figure}
\begin{center}
\includegraphics[scale=0.55]{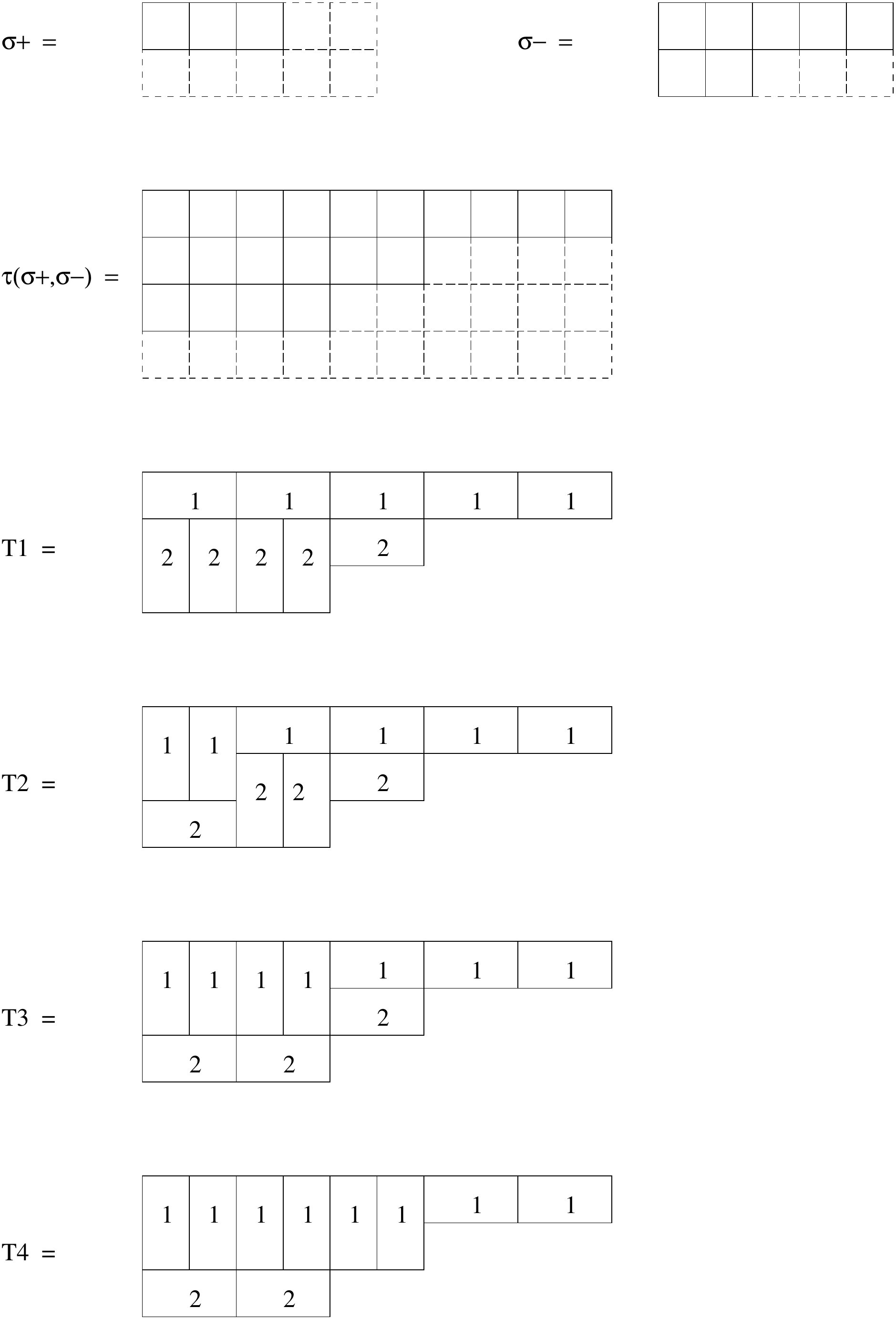}
\end{center}
\caption{\label{figure1} {Yamanouchi domino tableaux of shape $\tau(\sigma_+,\sigma_-)$.}}
\end{figure}

\begin{figure}
\begin{center}
\includegraphics[scale=0.55]{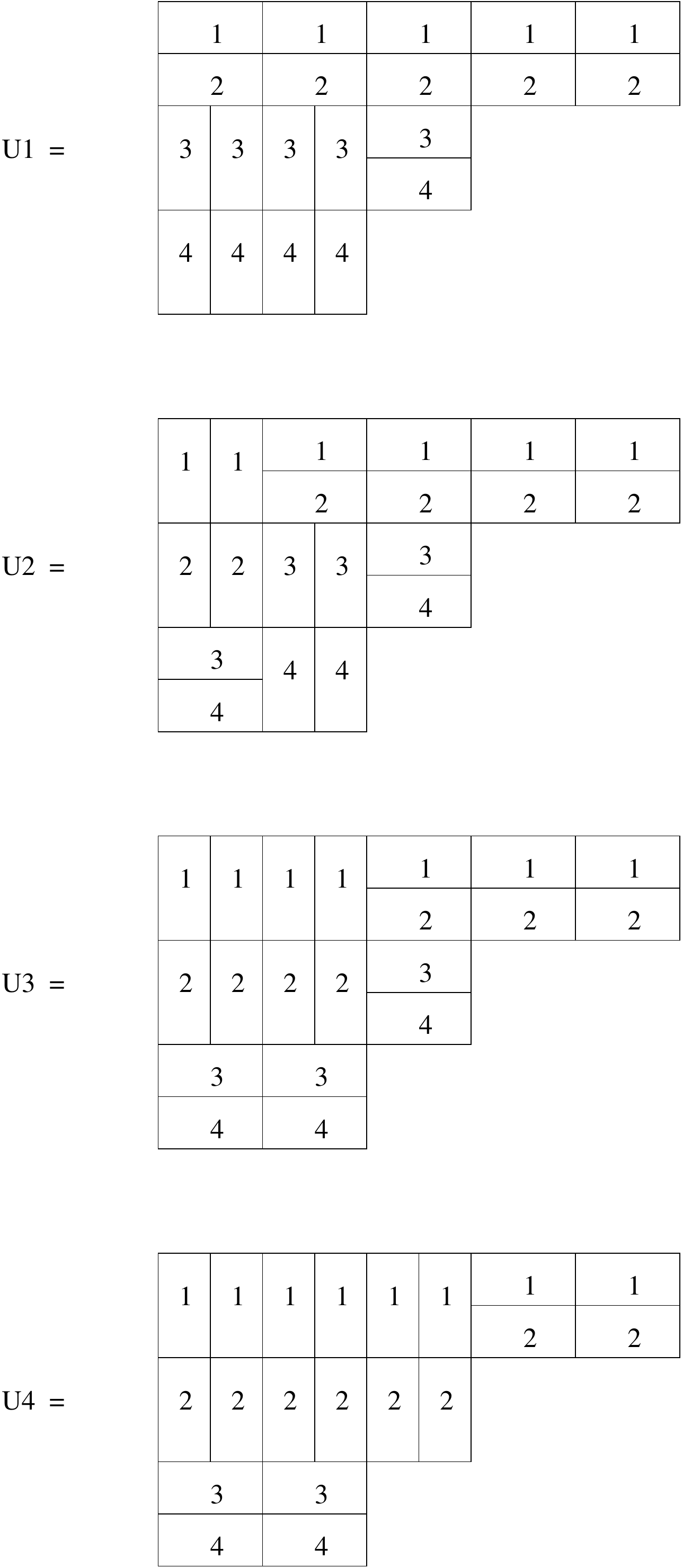}
\end{center}
\caption{\label{figure2} {Yamanouchi domino tableaux of shape $\tau(\sigma,\sigma)$.}}
\end{figure}

\begin{figure}
\begin{center}
\includegraphics[scale=0.55]{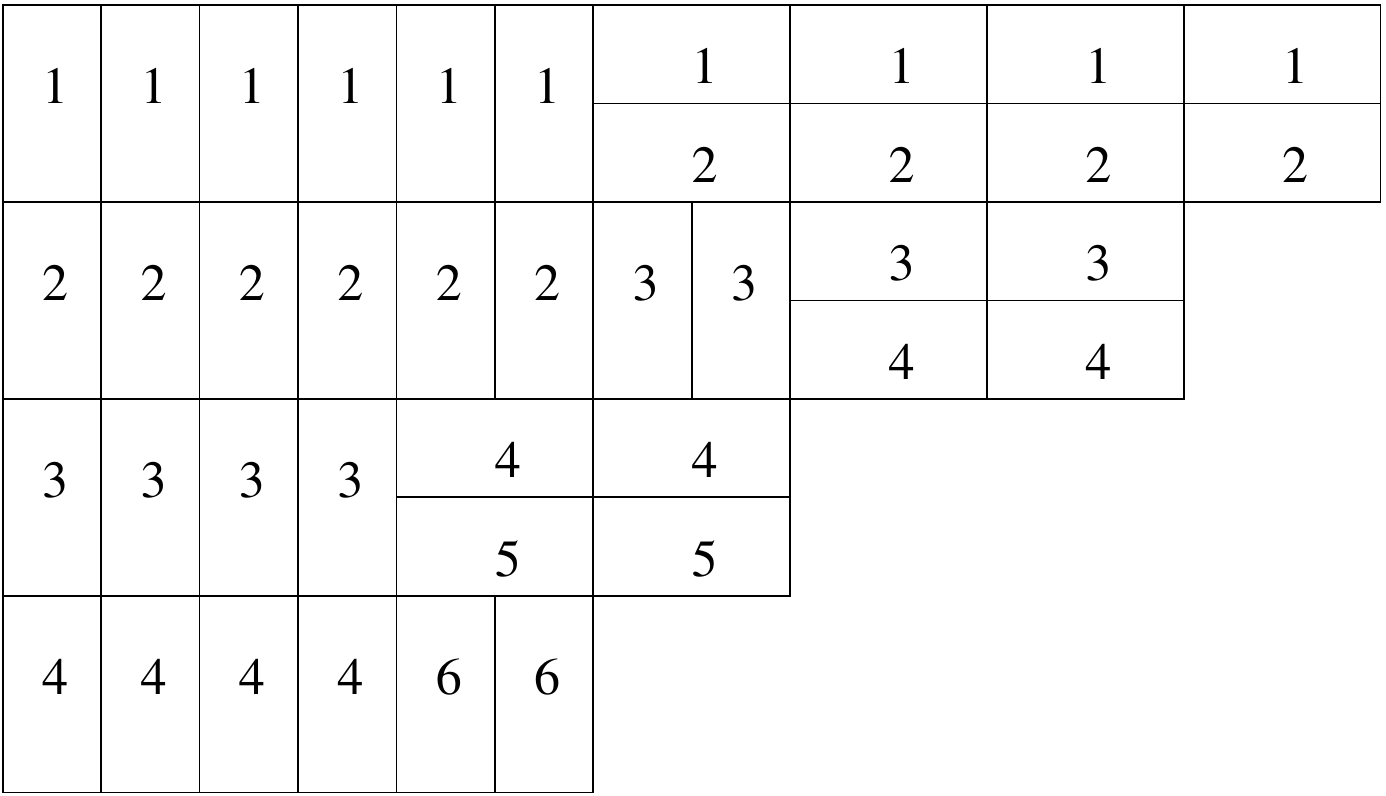}
\end{center}
\caption{\label{figure3} {A Yamanouchi tableau of shape $\tau(\sigma,\sigma)$ and weight $\nu^{(2)}$
which is not obtained by the duplication procedure of Proposition~\ref{inequality}.}}
\end{figure}

\end{document}